\documentclass[reqno, 12pt]{amsart}
\usepackage{amssymb}
\usepackage{amsmath}
\usepackage[mathscr]{euscript}
\usepackage{mathtools}
\usepackage[top=2cm,bottom=3cm,left=2.5cm,right=2.5cm]{geometry}
\usepackage{hyperref}

\theoremstyle{plain}
\newtheorem{theorem}{Theorem}
\newtheorem{corollary}{Corollary}
\newtheorem{lemma}{Lemma}

\theoremstyle{definition}
\newtheorem{definition}{Definition}
\newtheorem{example}{Example}
\newtheorem{remark}{Remark}

\thispagestyle{empty}

\numberwithin{equation}{section}

\thispagestyle{empty}

\begin{document}

\title[Conditional Lipschitz shadowing]{Conditional Lipschitz shadowing \\for ordinary
differential equations}

\title[Conditional shadowing]{Conditional Lipschitz shadowing \\for ordinary
differential equations}

\author[L. Backes]{Lucas Backes}
\address{\noindent Departamento de Matem\'atica, Universidade Federal do Rio Grande do Sul, Av. Bento Gon\c{c}alves 9500, CEP 91509-900, Porto Alegre, RS, Brazil.}
\email{lucas.backes@ufrgs.br} 
\author[D. Dragi\v cevi\'c]{Davor Dragi\v cevi\'c}
\address{Faculty of Mathematics, University of Rijeka, Radmile Matej\v ci\' c 2, 51000 Rijeka, Croatia}
\email{ddragicevic@math.uniri.hr}
\author[M. Onitsuka]{Masakazu Onitsuka}
\address{Department of Applied Mathematics, Faculty of Science, Okayama University of Science, Ridai-cho 1-1, Okayama 700-0005, Japan}
\email{onitsuka@ous.ac.jp}
\author[M. Pituk]{Mih\'aly Pituk}
\address{Department of Mathematics, University of Pannonia, Egyetem \'ut 10, 8200 Veszpr{\'e}m, Hungary;
ELKH-ELTE Numerical Analysis and Large Networks Research Group}
\email{pituk.mihaly@mik.uni-pannon.hu}

\subjclass[2020]{Primary: 34D10, Secondary: 37D99}

\keywords{shadowing, Hyers--Ulam stability, exponential dichotomy, logarithmic norm}

\maketitle

\vskip10pt
\begin{abstract}
We introduce the notion of conditional Lipschitz shadowing, which 
does not aim to shadow every pseudo-orbit, but only those which belong to a certain prescribed set. We establish two types of sufficient conditions under which certain non\-auto\-nomous ordinary differential equations have such a property. The first criterion applies to a semilinear differential equation provided that its linear part is hyperbolic and the nonlinearity is small in a neighborhood of the prescribed set. The second criterion requires that the logarithmic norm of the derivative of the right-hand side with respect to the state variable is uniformly negative in a neighborhood of the prescribed set. 
The results are applicable to important classes of model equations including the logistic equation, whose conditional shadowing has recently been studied. Several examples are constructed showing that the obtained conditions are optimal.
\end{abstract}

\maketitle

\section{Introduction}
\label{sec1}

One of the main properties of chaotic dynamical systems is that different orbits starting close by will move apart as time evolves and  
this divergence can be very fast. In particular, whenever performing numerical experiments, small numerical errors, which are inherent to the process due to computer round off errors, tend to grow. Thus, a central question 
when performing such numerical experiments is if the resulting behaviour presented by the computer reflects the dynamics of the actual system. It turns out that for some classes of chaotic systems, like the hyperbolic ones, even though a numerical orbit containing round off errors will diverge rapidly from the true orbit with the same initial condition, there exists a different true orbit 
which stays near the noisy orbit.
Systems exhibiting this property are said to have the \emph{shadowing property}. In other words, a dynamical system  
has the shadowing property if close to its approximate orbits we can find exact ones. The main objective of the present paper is to present sufficient conditions under which a general class of nonautonomous nonlinear ordinary differential equations exhibits a new variant of 
shadowing property, the so-called \emph{conditional Lipschitz shadowing property}, 
defined below (see Definition~\ref{def: conditional shadowing}).

Let $\mathbb R^n$ and $\mathbb R^{n\times n}$ denote the $n$-dimensional space of real column vectors and the space of $n\times n$ matrices with real entries, respectively. The symbol $|\cdot|$ denotes any convenient norm on $\mathbb R^n$ and the associated induced norm on $\mathbb R^{n\times n}$. Consider the ordinary differential equation 
\begin{equation}\label{ODE}
x'=g(t,x),
\end{equation}
where $g\colon [0, \infty) \times \mathbb R^n \to \mathbb R^n$ is a continuous. We are interested in the noncontinuable solutions of~\eqref{ODE} starting at $t=0$. It is known that these solutions are defined on intervals of type $[0,\tau)$, where $\tau\in(0,\infty]$ may depend on the solution~$x$. For this reason, we will consider pseudosolutions (approximate solutions) of~\eqref{ODE} on intervals of the same type in the following sense.
Given $\tau\in(0,\infty]$, by a \emph{pseudosolution of Eq.~\eqref{ODE} on $[0,\tau)$}, we mean any continuously differentiable function $y\colon[0,\tau)\rightarrow\mathbb R^n$ such that 
\begin{equation}
\label{pseudodef}
	\sigma_y:=\sup_{0\leq t<\tau}|y'(t)-g(t,y(t))|<\infty.
\end{equation}
The function $e_y\colon[0,\tau)\rightarrow[0,\infty)$ defined by
\begin{equation}
\label{error}
e_y(t):=|y'(t)-g(t, y(t))|\qquad\text{for $t\in[0,\tau)$,}
\end{equation}
will be called the \emph{error function} and the quantity $\sigma_y$ is the \emph{maximum error} corresponding to~$y$.
Let us recall the definition of the standard Lipschitz shadowing property.

\begin{definition}

We say that Eq.~\eqref{ODE} has the \emph{Lipschitz shadowing property} if there exist $\varepsilon_0>0$ and $\kappa>0$ with the following property: 
if $0<\varepsilon\leq\varepsilon_0$ and $y$ is a pseudosolution of~\eqref{ODE} on $[0,\tau)$ for some $\tau\in(0,\infty]$ such that 
$\sigma_y\leq\varepsilon$, then Eq.~\eqref{ODE} has a solution $x$ on $[0,\tau)$ satisfying
\begin{equation}
\label{closesol}
	\sup_{0\leq t<\tau}|x(t)-y(t)|\leq \kappa\varepsilon.
\end{equation}
\end{definition}
The concept of Lipschitz shadowing is closely related to the stronger notion of \emph{Hyers--Ulam stability}
(\emph{Ulam stability}), 
which requires that the condition in the above definition is satisfied for every $\varepsilon>0$. For a related concept in the theory of smooth dynamical systems, see~\cite[Definition 1.5]{Pil}.

Consider the semilinear differential equation
\begin{equation}
\label{pode}
x'=A(t)x+f(t,x)
\end{equation}
as a perturbation of the linear equation
\begin{equation}
\label{lde}
x'=A(t)x,
\end{equation}
where $A\colon[0,\infty)\rightarrow\mathbb R^{n\times n}$  and $f\colon[0,\infty)\times \mathbb R^{n}\rightarrow\mathbb R^n$ are continuous. 
The following known result provides a sufficient condition for the Lipschitz shadowing property of~\eqref{pode} (see~\cite[Theorem 6]{BD}).

\begin{theorem}
\label{bdthm}
Suppose that the linear equation~\eqref{lde} has an exponential dichotomy on $[0,\infty)$ (see Definition~\ref{ED}) and there exists $L\geq0$ such that
\begin{equation}
	\label{globlip}
|f(t,x_1)-f(t,x_2)|\leq L|x_1-x_2|\qquad\text{for all $t\geq0$ and $x_1, x_2\in\mathbb R^n$.}
\end{equation}
If~$L$ is sufficiently small, then Eq.~\eqref{pode} has the Lipschitz shadowing property.
\end{theorem}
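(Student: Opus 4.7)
The plan is to recast the existence of a shadowing solution as a fixed-point problem solvable by the Banach contraction principle, with the exponential dichotomy playing the role of the linear inversion. Fix any pseudosolution $y$ of~\eqref{pode} on $[0,\tau)$ with $\sigma_y\leq\varepsilon$, and set the defect
\[
h(t):=y'(t)-A(t)y(t)-f(t,y(t)),
\]
so that $|h(t)|=e_y(t)\leq\varepsilon$ on $[0,\tau)$. Writing the sought shadowing solution as $x=y+z$, a direct computation shows that $x$ solves~\eqref{pode} if and only if
\[
z'=A(t)z+G(t,z)-h(t),\qquad G(t,z):=f(t,y(t)+z)-f(t,y(t)).
\]
By~\eqref{globlip}, $G$ satisfies $|G(t,z_1)-G(t,z_2)|\leq L|z_1-z_2|$ and $G(t,0)=0$; thus the new nonlinearity is globally Lipschitz in $z$ with constant $L$ and vanishes at $z=0$, independently of the behaviour of $y$.

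Next I would invert the linear part using the dichotomy. Let $\Phi(t,s)$ be the transition matrix of~\eqref{lde}, $P(t)$ the dichotomy projections, and $K,\alpha>0$ the associated constants. On the Banach space $Z:=C_b([0,\tau),\mathbb{R}^n)$ with the supremum norm $\|\cdot\|_\infty$, define
\[
(Tz)(t):=\int_0^t\Phi(t,s)P(s)\bigl[G(s,z(s))-h(s)\bigr]\,ds-\int_t^{\tau}\Phi(t,s)(I-P(s))\bigl[G(s,z(s))-h(s)\bigr]\,ds.
\]
The standard dichotomy bounds $|\Phi(t,s)P(s)|\leq Ke^{-\alpha(t-s)}$ for $s\leq t$ and $|\Phi(t,s)(I-P(s))|\leq Ke^{-\alpha(s-t)}$ for $s\geq t$ give, after integrating the two exponentials,
\[
\|Tz\|_\infty\leq \tfrac{2K}{\alpha}\bigl(L\|z\|_\infty+\varepsilon\bigr)\qquad\text{and}\qquad \|Tz_1-Tz_2\|_\infty\leq \tfrac{2KL}{\alpha}\|z_1-z_2\|_\infty.
\]
Hence $T$ maps $Z$ into itself and is a contraction whenever $L<\alpha/(2K)$, which quantifies the ``sufficiently small'' hypothesis of the theorem.

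The Banach fixed-point theorem then yields a unique $z^\ast\in Z$ with $Tz^\ast=z^\ast$ and $\|z^\ast\|_\infty\leq\kappa\varepsilon$, where $\kappa:=(2K/\alpha)/(1-2KL/\alpha)$. Differentiating the integral identity $z^\ast=Tz^\ast$ under the integral sign shows that $z^\ast$ is $C^1$ and solves the displacement equation derived above, so $x:=y+z^\ast$ is a genuine solution of~\eqref{pode} on $[0,\tau)$ satisfying~\eqref{closesol} with this $\kappa$; the constant $\varepsilon_0$ may be taken arbitrarily (any positive value suffices), reflecting the fact that this argument in fact delivers Hyers--Ulam stability.

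The main subtlety I foresee is making sure the construction is uniform in the length of $[0,\tau)$, since $\tau$ may be either infinite or finite (in the latter case $y$ need not even be bounded near $\tau^-$, so $f(t,y(t))$ may blow up). This is precisely why the problem is posed for the \emph{displacement} $z$ rather than for $x$ itself: the integrand of $T$ involves $f$ only through the difference $G(t,z)=f(t,y(t)+z)-f(t,y(t))$, which is globally Lipschitz in $z$ with $G(t,0)=0$, so the integrals defining $Tz$ are absolutely convergent for every $z\in Z$ irrespective of the boundedness of $y$, and the contraction estimate is independent of $\tau$.
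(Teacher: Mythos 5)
Your proof is correct and follows essentially the same route as the paper's: the paper proves Theorem~\ref{bdthm} as the special case $H=\mathbb R^n$ of Theorem~\ref{T1}, whose proof is exactly this displacement-plus-Lyapunov--Perron fixed-point argument with the same operator and the same $2N/\lambda$ estimates. The only (harmless) deviations are cosmetic: you run the Banach contraction principle on all of $C_b([0,\tau),\mathbb R^n)$ and extract the bound $\|z^\ast\|_\infty\le\kappa\varepsilon$ afterwards, whereas the paper restricts $\mathcal F$ to the closed ball of radius $\kappa\varepsilon$ (a restriction that is only genuinely needed in the conditional case $H\neq\mathbb R^n$, where the Lipschitz estimate for $f$ is only local); and your closing remark that $\varepsilon_0$ can be taken arbitrary, so that the global Lipschitz hypothesis actually yields Hyers--Ulam stability, is a correct observation that the paper also makes implicitly.
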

Note that Theorem~\ref{bdthm} can be extended to the more general class of delay differential equations (see~\cite[Theorem~2.3]{BDPS}). For further related results about shadowing and Hyers--Ulam stability of ordinary differential equations and their discrete counterparts,  
see~\cite{BD0, BD, BD2, BDPS, BBT, BCDP, BLO, BOST} and references therein.

Some recent  studies (see \cite{Onit, PRV}) have been concerned with the shadowing (Hyers--Ulam stability) of the scalar \emph{logistic equation}
\begin{equation}
\label{logisteq}
x'=x(ax+b),\qquad a,b\in\mathbb R\setminus\{0\},
\end{equation}
which is a particular case of Eq.~\eqref{pode} when $n=1$, $A(t)=b$ and  
$f(t,x)=ax^2$. Note that Theorem~\ref{bdthm} cannot be applied to Eq.~\eqref{logisteq} because~$f$ does not satisfy the global Lipschitz condition~\eqref{globlip}. In~\cite{PRV}, it has been shown that  
Eq.~\eqref{logisteq} with $a=-1$ and $b=1$ is not Hyers--Ulam stable, but certain approximate solutions still can be shadowed by true solutions.
 
Motivated by this observation, we introduce the notion of conditional Lipschitz shadowing, 
which does not require the validity of the Lipschitz shadowing property for all pseudosolutions, but only for those which belong to a given set $H\subset\mathbb R^n$.
\begin{definition}\label{def: conditional shadowing}
Let $H$ be a nonempty subset of $\mathbb R^n$. 
We say that Eq.~\eqref{ODE} has the \emph{conditional Lipschitz shadowing property in~$H$} if there exist $\varepsilon_0>0$ and $\kappa>0$ with the following property: if $0<\varepsilon\leq\varepsilon_0$ and $y$ is a pseudosolution of~\eqref{ODE} on $[0,\tau)$ for some $\tau\in(0,\infty]$ such that 
$\sigma_y\leq\varepsilon$ and $y(t)\in H$ for all $t\in[0,\tau)$, then Eq.~\eqref{ODE} has a solution $x$ on $[0,\tau)$ satisfying~\eqref{closesol}.
\end{definition}

Evidently, the standard Lipschitz shadowing property is a special case of the conditional  Lipschitz shadowing property with $H=\mathbb R^n$. A different concept of conditional shadowing for discrete nonautonomous systems in a Banach space has recently been introduced by Pilyugin~\cite{Pil2}.

In this paper, we establish two types of sufficient conditions under which certain classes of ordinary differential equations have the conditional Lipschitz shadowing property in a given set $H\subset\mathbb R^n$.

In Sec.~\ref{sec2}, we consider the semilinear differential equation~\eqref{pode}. The main result of this part is formulated in Theorem~\ref{T1}, which is a generalization of Theorem~\ref{bdthm} to the case of conditional Lipschitz shadowing. It says that Eq.~\eqref{pode} has the conditional Lipschitz shadowing property in a prescribed set~$H$ whenever its linear part has an exponential dichotomy and the nonlinearity $f(t,x)$ is Lipschitz in~$x$ in a neighborhood of~$H$ (uniformly in~$t$) with a sufficiently small Lipschitz constant. The smallness condition on the Lipschitz constant can be expressed in terms the dichotomy constants of the linear part. The importance of the obtained sufficient condition will be shown by an application to a scalar logistic equation. In Example~\ref{EXM}, we show that our choice of pseudosolutions is optimal.

 In Sec.~\ref{lnorms}, we present sufficient conditions for the conditional Lipschitz shadowing of Eq.~\eqref{ODE} in terms of the logarithmic norm of $g_x(t,x)$, the partial derivative of $g$ with respect to~$x$. The  \emph{logarithmic norm} (\emph{Lozinski\u{\i} measure}) of 
 a square matrix  $A\in \mathbb R^{n\times n}$ is defined by
	\begin{equation}
	\label{lognorm}
\mu(A):=\lim_{h\to0+}\frac{|I+hA|-1}{h}\qquad \text{for $A\in\mathbb R^{n\times n}$},
\end{equation}
where $I$ is the identity matrix in $\mathbb R^{n\times n}$.
It should be noted that~$\mu$ is not a norm, since it can take negative values. In the scalar case ($n=1$), we have that $\mu(A)=A$. The values of $\mu(A)$ for the standard norms in~$\mathbb R^n$ can be given explicitly (see Sec.~\ref{lnorms}). 
 The main result of this section, Theorem~\ref{globcrit}, says that if $\mu(g_x(t,x))$ is uniformly negative (bounded away from zero) for all $t\geq0$ and $x$ in a neighborhood of the given set $H\subset\mathbb R^n$, then Eq.~\eqref{ODE} has the conditional Lipschitz shadowing property in~$H$. 
 To the best of our knowledge, this criterion has no previous analogue. It gives a new result even in the case of the standard Lipschitz shadowing. The importance and the sharpness of the assumptions 
will be shown in a special case of the Kermack--McKendrick equation from epidemiology.

\section{Conditional Lipschitz shadowing via exponential dichotomy}
\label{sec2}

In this section, we give sufficient conditions for the conditional Lipschitz shadowing of the semilinear equation~\eqref{pode}.

Let $\varPhi$ be a fundamental matrix solution of the linear equation~\eqref{lde} so that its transition matrix $T(t,s)$ is given by
\begin{equation*}
\label{transmat}
T(t,s):=\varPhi(t)\varPhi^{-1}(s)\qquad\text{for $t,s\in[0,\infty)$}.
\end{equation*}
\begin{definition}\label{ED}
We say that Eq.~\eqref{lde} has an \emph{exponential dichotomy} on~$[0,\infty)$ if there exist a family of projections $\left(P(t)\right)_{t\geq0}$ in~$\mathbb R^{n\times n}$ and  constants $N, \lambda >0$ such that
\begin{gather} 
P(t)T(t,s)=T(t,s)P(s)\qquad\text{for all $t,s\in[0,\infty)$},\label{ed0}\\ 
|T(t,s)P(s)|\leq Ne^{-\lambda(t-s)}\qquad\text{whenever $t\geq s\geq 0$}\label{ed1}
\end{gather}
and
\begin{equation}\label{ed2}
|T(t,s)(I-P(s))|\leq Ne^{-\lambda(s-t)}\qquad\text{whenever $0\leq t\leq s$}.
\end{equation}

If, in addition, $P(t)=I$ ($P(t)=0$) identically for $t\ge 0$, we say that Eq.~\eqref{lde} admits an \emph{exponential contraction (exponential expansion)} on $[0,\infty)$.
\end{definition}

Throughout the paper, for $x\in\mathbb R^n$ and $\delta>0$, $B_\delta(x)$ will denote the closed $\delta$-neighborhood of~$x$ in $\mathbb R^n$ given by $B_\delta(x):=\{\,x\in\mathbb R^n:|y-x|\leq\delta\,\}$. For $\emptyset\neq H\subset\mathbb R^n$, the closed $\delta$-neighborhood of~$H$ is defined by
\begin{equation*}
\mathcal N_\delta(H):=\bigcup_{x\in H} B_\delta(x).
\end{equation*}

The main result of this section is the following generalization of Theorem~\ref{bdthm} to the case of conditional Lipschitz shadowing.
\begin{theorem}\label{T1}
Let $\emptyset\neq H\subset\mathbb R^n$. Suppose that Eq.~\eqref{lde} has an exponential dichotomy on $[0,\infty)$ and that there exist $\delta, L>0$ such that
\begin{equation}
	\label{lipsch}
|f(t,x_1)-f(t,x_2)|\leq L|x_1-x_2|,\qquad\text{for all $t\geq0$ and $x_1,x_2\in{\mathcal N}_{\delta}(H)$.}
\end{equation}
If
\begin{equation}
\label{smallcond}
L<\frac{\lambda}{2N},
\end{equation}
where $N, \lambda >0$ are as in Definition~\ref{ED},
then Eq.~\eqref{pode} has the conditional Lipschitz shadowing property in~$H$.
\end{theorem}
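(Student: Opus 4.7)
The plan is to reduce the existence of the shadowing solution to a fixed-point problem. Let $y$ be a pseudosolution with $\sigma_y \le \varepsilon$ and $y(t) \in H$ for all $t \in [0,\tau)$. Introduce the residual $\rho(t) := y'(t) - A(t) y(t) - f(t, y(t))$, which satisfies $|\rho(t)| \leq \varepsilon$. I seek the shadowing solution in the form $x = y + z$, where $z : [0, \tau) \to \mathbb{R}^n$ is bounded and continuous; then $x$ solves \eqref{pode} on $[0, \tau)$ if and only if
\begin{equation*}
z'(t) = A(t) z(t) + [f(t, y(t) + z(t)) - f(t, y(t))] - \rho(t).
\end{equation*}
As long as $\|z\|_\infty \le \delta$ we have $y(t) + z(t) \in \mathcal{N}_\delta(H)$, so the hypothesis \eqref{lipsch} applies and the bracketed term is $L$-Lipschitz in $z$ and bounded in norm by $L\|z\|_\infty$.

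Using the dichotomy Green function, I define the operator
\begin{equation*}
(\mathcal{T}z)(t) := \int_0^t T(t,s) P(s) h_z(s) \, ds \; - \int_t^\tau T(t,s) (I - P(s)) h_z(s) \, ds,
\end{equation*}
where $h_z(s) := f(s, y(s) + z(s)) - f(s, y(s)) - \rho(s)$. Both integrals are well defined by \eqref{ed1}--\eqref{ed2}, irrespective of whether $\tau < \infty$ or $\tau = \infty$, since $|h_z|$ is uniformly bounded whenever $\|z\|_\infty \le \delta$. Differentiating via the Leibniz rule, using $T(t,t) = I$, $\partial_t T(t,s) = A(t) T(t,s)$, and $P(t) + (I - P(t)) = I$, one verifies that any fixed point $z^* = \mathcal T z^*$ is a classical solution of the ODE displayed above, so that $x := y + z^*$ solves \eqref{pode} on $[0, \tau)$. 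On the closed ball $\overline{B}_\delta \subset C_b([0,\tau); \mathbb{R}^n)$, standard bookkeeping with \eqref{ed1}, \eqref{ed2}, and \eqref{lipsch} yields
\begin{equation*}
\|\mathcal{T}z\|_\infty \leq \frac{2N}{\lambda}(L\|z\|_\infty + \varepsilon), \qquad \|\mathcal{T}z_1 - \mathcal{T}z_2\|_\infty \leq \frac{2NL}{\lambda} \|z_1 - z_2\|_\infty.
\end{equation*}

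Hypothesis \eqref{smallcond} gives $q := 2NL/\lambda < 1$, so I set $\kappa := (2N/\lambda)/(1 - q)$ and $\varepsilon_0 := \delta/\kappa$. For $\varepsilon \le \varepsilon_0$, the sub-ball $\{\|z\|_\infty \le \kappa\varepsilon\}$ is contained in $\overline{B}_\delta$, is $\mathcal T$-invariant (by the first estimate), and $\mathcal T$ contracts on it (by the second); Banach's fixed-point theorem produces a unique $z^*$ with $\|z^*\|_\infty \le \kappa \varepsilon$, and $x := y + z^*$ satisfies \eqref{closesol}. The main subtlety is the double role played by the ball radius: the bound $\|z\|_\infty \le \delta$ is required merely in order to invoke \eqref{lipsch}, while the tighter bound $\|z\|_\infty \le \kappa\varepsilon$ is what actually makes the self-mapping property work for small $\varepsilon$. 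The quantitative inequality \eqref{smallcond} is exactly the calibration that lets one reconcile these two requirements through the explicit constants $\varepsilon_0$ and $\kappa$. The only genuinely new technical point compared with Theorem~\ref{bdthm} is that the unstable integral has upper endpoint $\tau$ rather than $\infty$ when $\tau < \infty$; but \eqref{ed2} controls the truncated integral uniformly in $\tau$, so the same constants work in both cases.
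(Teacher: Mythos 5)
Your proposal is correct and follows essentially the same route as the paper's proof: the transformation $x=y+z$, the same dichotomy Green-function operator, a self-map and contraction estimate on the ball $\{\|z\|_\infty\le\kappa\varepsilon\}\subset\overline{B}_\delta$, and the Banach fixed-point theorem, with $\kappa=2N/(\lambda-2NL)$ and $\varepsilon_0=\delta/\kappa$ identical to the paper's choices (your $\rho$ is the paper's $-h_y$, so the operators coincide). The remark about the truncated unstable integral when $\tau<\infty$ is exactly how the paper handles it as well, bounding $\int_t^\tau$ by $\int_t^\infty$.
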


\begin{proof}
From \eqref{smallcond}, it follows that the equation
\begin{equation}\label{kappa}
\frac{2N}{\lambda}(L\kappa+1)=\kappa
\end{equation}
has a unique solution $\kappa$  given by
\begin{equation*}
\kappa:=\bigg(\frac{\lambda}{2N}-L\biggr)^{-1}=\frac{2N}{\lambda-2NL}>0.
\end{equation*}
Set $\varepsilon_0:=\kappa^{-1}\delta>0$.
Suppose that 
$0<\varepsilon\leq \varepsilon_0$ and $y$ is a pseudosolution of~\eqref{pode} on $[0,\tau)$ for some $\tau\in(0,\infty]$ such that 
$\sigma_y\leq\varepsilon$ and $y(t)\in H$ for all $t\in[0,\tau)$.  Observe that the transformation
$z=x-y$ reduces Eq.~\eqref{pode} to the equation
\begin{equation}
\label{zode}
z'=A(t)z+f(t,y(t)+z)-f(t,y(t))+h_y(t),	
\end{equation}
where
\begin{equation}
\label{beq}
h_y(t):=A(t)y(t)+f (t,y(t))-y'(t)\qquad\text{for $t\in[0,\tau)$}.
\end{equation}
Clearly, $|h_y(t)|=e_y(t)$ for $t\in[0,\tau)$, where $e_y$ is the error function corresponding to the pseudosolution~$y$ of Eq.~\eqref{pode}. Hence
\begin{equation}
\label{hsmall}
	\sup_{0\leq t<\tau}|h_y(t)|=\sigma_y\leq\varepsilon.
\end{equation}
Let $\mathcal C_b:=C([0,\tau),\mathbb R^n)$ denote the Banach space of bounded and continuous functions $z\colon [0, \tau) \to \mathbb R^n$  equipped with the supremum norm,
\begin{equation*}
\|z\|=\sup_{t\in[0,\tau)}|z(t)|,\qquad z\in\mathcal C_b.
 \end{equation*}
 Set
\begin{equation*}
\label{Sdef}
S:=\{\,z\in\mathcal C_b:\|z\|\leq\kappa\varepsilon\,\}.
\end{equation*}
Clearly, $S$ is a nonempty and closed subset of~$\mathcal C_b$. For $z\in S$ and $t\in[0,\tau)$, define
\begin{align*}
(\mathcal F z)(t)&:=\int_0^t T(t,s)P(s)\left[\,f(s,y(s)+z(s))-f(s,y(s))+h_y(s)\,\right]\,ds\\
&\qquad-\int_t^\tau T(t,s)(I-P(s))\left[\,f(s,y(s)+z(s))-f(s,y(s))+h_y(s)\,\right]\,ds.
\end{align*}
Take an arbitrary $z\in S$ and $s\in[0,\tau)$. Then,
\begin{equation*}
|(y(s)+z(s))-y(s)|=|z(s)|\leq\|z\|\leq\kappa\varepsilon\leq\kappa\varepsilon_0=\delta.
\end{equation*}
Therefore,
\begin{equation}
\label{incl}
y(s)\in H\subset\mathcal N_\delta(H)\qquad\text{and}\qquad
y(s)+z(s)\in B_\delta(y(s))\subset\mathcal N_\delta(H),
\end{equation}
which, together with~\eqref{lipsch}, implies that
\begin{equation*}
|f(s,y(s)+z(s))-f(s,y(s))|\leq L|z(s)|\leq L\|z\|\leq L\kappa\varepsilon.
\end{equation*}
This, combined with~\eqref{ed1}, \eqref{ed2} and~\eqref{hsmall} (see also~\eqref{kappa}) yields that 
\begin{equation}\label{00}
\begin{split}
&|(\mathcal F z)(t)|\leq \int_0^t |T(t,s)P(s)|
\left(\,|f(s,y(s)+z(s))-f(s,y(s))|+|h_y(s)|\,\right)\,ds\\
&\qquad+\int_t^\tau |T(t,s)(I-P(s))|\left(\,|f(s,y(s)+z(s))-f(s,y(s))|+|h_y(s)|\,\right)\,ds\\
&\leq\left(L\kappa\varepsilon+\varepsilon\right)
\biggl(\int_0^t Ne^{-\lambda(t-s)}\,ds
+\int_t^\infty Ne^{-\lambda(s-t)}\,ds\biggr)\\
&\leq\frac{2N}{\lambda}\left(L\kappa+1\right)\varepsilon=\kappa\varepsilon,
\end{split}
\end{equation}
 for $z\in S$ and $t\in[0,\tau)$.
We conclude that  $\mathcal F z$ is well-defined and $\mathcal F(S)\subset S$.

Let $z_1, z_2\in S$. In view of~\eqref{incl}, we have that $y(s)+z_j(s)\subset\mathcal N_\delta(H)$ for $s\in[0,\tau)$ and $j=1,2$. Hence,
\begin{equation*}
|f(s,y(s)+z_1(s))-f(s,y(s)+z_2(s))|\leq L|z_1(s)-z_2(s)|\leq L\|z_1-z_2\|,
\end{equation*}
for $s\in [0, \tau)$. 
Consequently, 
\begin{equation}\label{11}
\begin{split}
&|(\mathcal F z_1)(t)-(\mathcal F z_2)(t)| \\
&\leq \int_0^t |T(t,s)P(s)|
\left(\,|f(s,y(s)+z_1(s))-f(s,y(s)+z_2(s))|\,\right)\,ds\\
&\qquad+\int_t^\tau |T(t,s)(I-P(s))|\left(\,|f(s,y(s)+z_1(s))-f(s,y(s)+z_2(s))|\,\right)\,ds\\
&\leq L\|z_1-z_2\|
\biggl(\int_0^t Ne^{-\lambda(t-s)}\,ds
+\int_t^\infty Ne^{-\lambda(s-t)}\,ds\biggr)\\
&\leq\frac{2N}{\lambda}L\|z_1-z_2\|,
\end{split}
\end{equation}
for $t\in [0, \tau)$.
Therefore, for all $z_1,z_2\in S$,
\begin{equation*}
\|\mathcal F z_1-\mathcal F z_2\|\leq q\|z_1-z_2\|\qquad\text{with $q:=\frac{2N}{\lambda}L<1$}.
\end{equation*}
Thus, $\mathcal F\colon S\rightarrow S$ is a contraction and it has a unique fixed point~$z$ in~$S$. It follows by differentiation that  $z$  is a solution of Eq.~\eqref{zode} on $[0,\tau)$. Moreover, $z\in S$ implies that
\begin{equation*}
\sup_{t\in[0,\tau)}|z(t)|=\|z\|\leq\kappa\varepsilon.
\end{equation*}
Therefore, $x=z+y$ is a solution of Eq.~\eqref{pode} on $[0,\tau)$ with the desired property~\eqref{closesol}. The proof of the theorem  is complete.
\end{proof}
\begin{remark}\label{old}
Theorem~\ref{bdthm} is a corollary Theorem~\ref{T1} with $H=\mathbb R^n$. \end{remark}

\begin{remark}
If $D$ is a nonempty set in $\mathbb R^n$, then its \emph{convex hull}, denoted by $\operatorname{conv}(D)$, is the smallest convex set in $\mathbb R^n$ which contains~$D$. A sufficient condition for the Lipschitz condition~\eqref{lipsch} to hold is that $f$ is continuously differentiable and
\begin{equation}
\label{smallderiv}
|f_x(t,x)|\leq L\qquad\text{for all $t\geq0$ and $x\in\operatorname{conv}\left(\mathcal N_\delta(H)\right)$}.\end{equation}
\end{remark}

The following result is an improvement of Theorem~\ref{T1} in the particular case when Eq.~\eqref{lde} admits an exponential contraction or exponential expansion. It shows that in these settings the smallness condition~\eqref{smallcond} for the Lipschitz constant~$L$ can be weakened.
\begin{theorem}
\label{T2}
Let $\emptyset\neq H\subset\mathbb R^n$. Suppose that Eq.~\eqref{lde} has an exponential contraction or exponential expansion on $[0,\infty)$ and there exist $\delta, L>0$ such that~\eqref{lipsch} holds.
If
\begin{equation}
\label{smallcond2}
L<\frac{\lambda}{N},
\end{equation}
then Eq.~\eqref{pode} has the conditional Lipschitz shadowing property in~$H$.
\end{theorem}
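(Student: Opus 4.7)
The plan is to follow the same scheme as the proof of Theorem~\ref{T1}, but to exploit the fact that in the presence of either an exponential contraction ($P(t)=I$) or an exponential expansion ($P(t)=0$) only one of the two Green's function integrals appearing in the definition of $\mathcal F$ survives. This will cut the estimate $2N/\lambda$ down to $N/\lambda$, which is precisely what is needed to relax the smallness condition to~\eqref{smallcond2}.

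Concretely, assuming~\eqref{smallcond2}, I would first choose
\begin{equation*}
\kappa:=\frac{N}{\lambda-NL}>0,\qquad \varepsilon_0:=\kappa^{-1}\delta,
\end{equation*}
so that $\frac{N}{\lambda}(L\kappa+1)=\kappa$, which will play the role that~\eqref{kappa} played in the previous proof. Given $0<\varepsilon\leq\varepsilon_0$ and a pseudosolution $y$ of~\eqref{pode} on $[0,\tau)$ with $\sigma_y\leq\varepsilon$ and $y(t)\in H$, I would again set $h_y(t):=A(t)y(t)+f(t,y(t))-y'(t)$ and work on the set $S:=\{z\in\mathcal C_b:\|z\|\leq\kappa\varepsilon\}$, rewriting the original equation as~\eqref{zode}.

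Now comes the case split. In the \emph{exponential contraction} case ($P(t)\equiv I$), I would define
\begin{equation*}
(\mathcal F z)(t):=\int_0^t T(t,s)\bigl[f(s,y(s)+z(s))-f(s,y(s))+h_y(s)\bigr]\,ds,
\end{equation*}
so that a fixed point of $\mathcal F$ in $S$ is automatically a solution of~\eqref{zode} with $z(0)=0$. In the \emph{exponential expansion} case ($P(t)\equiv 0$), I would instead define
\begin{equation*}
(\mathcal F z)(t):=-\int_t^{\tau} T(t,s)\bigl[f(s,y(s)+z(s))-f(s,y(s))+h_y(s)\bigr]\,ds,
\end{equation*}
which yields a bounded solution of~\eqref{zode} by backward integration. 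The estimates in~\eqref{incl} and the consequent bound $|f(s,y(s)+z(s))-f(s,y(s))|\leq L\kappa\varepsilon$ transfer verbatim, since any $z\in S$ still satisfies $y(s)+z(s)\in\mathcal N_\delta(H)$. Arguing as in~\eqref{00} and~\eqref{11} but with only one of the two integrals present, I would obtain $|(\mathcal F z)(t)|\leq \frac{N}{\lambda}(L\kappa+1)\varepsilon=\kappa\varepsilon$ and $\|\mathcal F z_1-\mathcal F z_2\|\leq \frac{NL}{\lambda}\|z_1-z_2\|$, so that $\mathcal F\colon S\to S$ is a contraction thanks to~\eqref{smallcond2}. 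The Banach fixed point theorem then produces a unique $z\in S$, and $x:=y+z$ is the desired true solution satisfying~\eqref{closesol}.

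The main technical point worth checking carefully is the \emph{expansion} case: one must verify that the backward integral is well defined (using~\eqref{ed2} with $P\equiv 0$, the integrand decays like $Ne^{-\lambda(s-t)}$, so the integral converges absolutely even when $\tau=\infty$), and that differentiating the resulting $\mathcal F z$ indeed recovers~\eqref{zode}. Everything else is a near-verbatim repetition of the argument for Theorem~\ref{T1}.
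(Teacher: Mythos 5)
Your proposal is correct and follows essentially the same route as the paper: choose $\kappa$ so that $\tfrac{N}{\lambda}(L\kappa+1)=\kappa$, reuse the set $S$ and the fixed-point operator from Theorem~\ref{T1}, and observe that when $P(t)\equiv I$ or $P(t)\equiv 0$ one of the two Green's-function integrals vanishes, cutting the constant from $2N/\lambda$ to $N/\lambda$. The only cosmetic difference is that you write out $\mathcal F$ separately in the two cases, whereas the paper keeps the single formula from Theorem~\ref{T1} and simply notes that one of the integrals is identically zero.
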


\begin{proof}
The proof proceeds in a similar manner as the proof of Theorem~\ref{T1}.  Take $\kappa >0$ such that 
\begin{equation*}
\frac{N}{\lambda}(L\kappa+1)=\kappa.
\end{equation*}
Let $\varepsilon_0>0$, $y$, $S$ and $\mathcal F$ be as in the proof of Theorem~\ref{T1}. By arguing as in~\eqref{00}  (recall that either $P(t)\equiv I$ or $P(t)\equiv0$), we have that 
\begin{equation*}
 | (\mathcal F z)(t)| \le \frac{N}{\lambda}(L\kappa +1)\varepsilon=\kappa \varepsilon
\end{equation*}
for $t\in [0, \tau)$ and $z\in S$. Moreover, by similar estimates as   in~\eqref{11}, we conclude that
\begin{equation*}
|(\mathcal F z_1)(t)-(\mathcal F z_2)(t)|\le \frac{N}{\lambda}L\|z_1-z_2\|,
\end{equation*}
for $t\in [0, \tau)$ and $z_1, z_2 \in S$. Now one can complete the proof by the same arguments as in the proof of Theorem~\ref{T1}.
\end{proof}

The following consequence of Theorems~\ref{T1}
and~\ref{T2} gives sufficient conditions under which Eq.~\eqref{pode} has the conditional Lipschitz shadowing property in a 
given neighborhood of the origin.

\begin{corollary}
\label{thm2}
Let $\rho>0$. Suppose that Eq.~\eqref{lde} has an exponential dichotomy on $[0,\infty)$ and there exist $\delta, L>0$  such that
\begin{equation}
	\label{balllipsch}
|f(t,x_1)-f(t,x_2)|\leq L|x_1-x_2|\qquad\text{for all $t\geq0$ and $x_1, x_2\in B_{\rho+\delta}(0)$}.
\end{equation}
Then, \eqref{smallcond} implies that Eq.~\eqref{pode} has the conditional Lipschitz shadowing property in $B_\rho(0)$. Moreover, if instead of the existence of  an exponential dichotomy, we assume that Eq.~\eqref{lde} has an exponential contraction or exponential expansion on $[0,\infty)$, then 
the conditional Lipschitz shadowing property of Eq.~\eqref{pode} in $B_\rho(0)$ holds under the weaker condition~\eqref{smallcond2}.
\end{corollary}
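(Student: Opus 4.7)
The plan is to deduce Corollary~\ref{thm2} directly from Theorems~\ref{T1} and~\ref{T2} by taking the prescribed set to be the closed ball $H := B_\rho(0)$, and then translating the Lipschitz hypothesis~\eqref{balllipsch} on $B_{\rho+\delta}(0)$ into the Lipschitz hypothesis~\eqref{lipsch} on the $\delta$-neighborhood $\mathcal N_\delta(H)$ required by those theorems.

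The first step will be to verify the elementary set inclusion
\begin{equation*}
\mathcal N_\delta(B_\rho(0)) \subseteq B_{\rho+\delta}(0).
\end{equation*}
Indeed, if $x \in \mathcal N_\delta(B_\rho(0))$, then by definition there exists $y \in B_\rho(0)$ with $|x - y| \leq \delta$, so the triangle inequality gives $|x| \leq |x - y| + |y| \leq \delta + \rho$. (One actually has equality of the two sets, but the inclusion above is all that is needed.)

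With this inclusion in hand, the hypothesis~\eqref{balllipsch} immediately implies that condition~\eqref{lipsch} holds with $H = B_\rho(0)$, since any $x_1, x_2 \in \mathcal N_\delta(B_\rho(0))$ automatically lie in $B_{\rho+\delta}(0)$. The first assertion of the corollary then follows by applying Theorem~\ref{T1} under the exponential dichotomy hypothesis together with the smallness condition~\eqref{smallcond}, while the second assertion follows by applying Theorem~\ref{T2} under the exponential contraction or exponential expansion hypothesis together with the weaker smallness condition~\eqref{smallcond2}.

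I do not anticipate any genuine obstacle in this argument: the corollary is a direct specialization of the main theorems of this section to the case where the prescribed set is a ball centered at the origin. The only non-trivial ingredient is the triangle inequality used to justify the set inclusion above, and the rest is merely a matter of quoting Theorems~\ref{T1} and~\ref{T2}.
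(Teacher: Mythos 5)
Your argument is correct and coincides with the paper's own proof: take $H = B_\rho(0)$, note that $\mathcal N_\delta(H) = B_{\rho+\delta}(0)$ (the inclusion you verify is all that is needed), and invoke Theorems~\ref{T1} and~\ref{T2} for the two assertions respectively. No gaps.
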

\begin{proof}Let $H=B_{\rho}(0)$. Then 
$\mathcal N_\delta(H)=B_{\rho+\delta}(0)$ and
the conclusion follows from Theorems~\ref{T1} and~\ref{T2}.
\end{proof}

The following theorem gives another reason for the interest in those  pseudosolutions of Eq.~\eqref{pode} which lie in a given ball around the origin.

\begin{theorem}
\label{thm3}
Let $\rho>0$. Suppose that Eq.~\eqref{lde} has an exponential dichotomy on $[0,\infty)$ and there exists $L>0$ such that
\begin{equation}
	\label{sublingrowth}
|f(t,x)|\leq L|x|\qquad\text{for all $t\geq0$ and $x\in B_\rho(0)$}.
\end{equation}
Then,~\eqref{smallcond} implies that there exists $\varepsilon>0$ such  that if~$y$ is a pseudosolution of Eq.~\eqref{pode} on $[0,\tau)$ for some $\tau\in(0,\infty]$ with $\sigma_y\leq\varepsilon$, then Eq.~\eqref{pode} has a pseudosolution $z$ on $[0,\tau)$ which lies in $B_\rho(0)$ and has the same error function as~$y$, i.e.
\begin{equation}
	\label{samerror}
e_z(t)=e_y(t)\qquad\text{for all $t\in[0,\tau)$}.
\end{equation}
\end{theorem}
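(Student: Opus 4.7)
The strategy is to emulate the proof of Theorem~\ref{T1}, but since hypothesis~\eqref{sublingrowth} supplies only linear growth of~$f$ rather than Lipschitz continuity, the Banach contraction principle must be replaced by a compactness-based fixed-point theorem. As before, define
\begin{equation*}
h_y(t) := y'(t) - A(t) y(t) - f(t, y(t)),
\end{equation*}
so that $|h_y(t)| = e_y(t)$ and $\sup_{t \in [0, \tau)} |h_y(t)| \le \sigma_y \le \varepsilon$. Any continuously differentiable $z \colon [0, \tau) \to \mathbb R^n$ with $|z(t)| \le \rho$ satisfying $z'(t) = A(t) z(t) + f(t, z(t)) + h_y(t)$ is automatically a pseudosolution of~\eqref{pode} lying in $B_\rho(0)$ with error function $e_z \equiv |h_y| = e_y$, so it suffices to produce such a~$z$.

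To obtain one, I would look for a fixed point on $S := \{z \in \mathcal C_b : \|z\| \le \rho\}$ of the operator
\begin{equation*}
(\mathcal G z)(t) := \int_0^t T(t, s) P(s)[f(s, z(s)) + h_y(s)]\,ds - \int_t^\tau T(t, s)(I - P(s))[f(s, z(s)) + h_y(s)]\,ds.
\end{equation*}
For $z \in S$, \eqref{sublingrowth} gives $|f(s, z(s))| \le L\rho$, and the dichotomy estimates \eqref{ed1}--\eqref{ed2} yield, exactly as in~\eqref{00},
\begin{equation*}
|(\mathcal G z)(t)| \le \frac{2 N}{\lambda}(L\rho + \varepsilon).
\end{equation*}
Choosing $\varepsilon := \rho(\lambda - 2NL)/(2N)$, which is strictly positive by~\eqref{smallcond}, makes this bound equal to~$\rho$, so $\mathcal G(S) \subseteq S$. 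A direct differentiation via the variation-of-constants formula shows that any fixed point $z \in S$ of $\mathcal G$ satisfies $z'(t) = A(t)z(t) + f(t,z(t)) + h_y(t)$, which is the desired identity.

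Since contraction is no longer available, I would apply the Schauder--Tychonoff fixed-point theorem in the Fr\'echet space $C([0, \tau), \mathbb R^n)$ endowed with the topology of uniform convergence on compact subsets, in which $S$ is closed and convex. Continuity of $\mathcal G$ in this topology follows from continuity of~$f$ and the dominated convergence theorem, with the tail of the second integral on $[T, \tau)$ controlled uniformly by the exponential bound~\eqref{ed2}. Relative compactness of $\mathcal G(S)$ then follows from the Arzel\`a--Ascoli theorem: since $(\mathcal G z)'(t) = A(t)(\mathcal G z)(t) + f(t, z(t)) + h_y(t)$ and $\|\mathcal G z\| \le \rho$, the family $\mathcal G(S)$ is uniformly bounded and equi-Lipschitz on every compact subinterval of $[0, \tau)$. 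The principal technical obstacle I foresee is exactly this topological verification --- continuity and relative compactness in the compact-open topology --- and in particular handling the improper tail integral uniformly in the approximating sequence when $\tau = \infty$. Once these are in hand, the Schauder--Tychonoff theorem delivers a fixed point $z \in S$, which by construction has all the required properties.
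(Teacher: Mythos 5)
Your proposal is correct and follows essentially the same route as the paper's proof: it uses the same choice of $\varepsilon = \rho\bigl(\lambda/(2N) - L\bigr)$, the same operator (up to your opposite sign convention for $h_y$, which cancels in the error function), the same ball $S$ of radius $\rho$, and the Schauder--Tychonoff fixed-point theorem in $C([0,\tau),\mathbb R^n)$ with the compact-open topology, with compactness via uniform bounds on $\mathcal G z$ and $(\mathcal G z)'$ together with Arzel\`a--Ascoli. The technical points you flag (continuity in the compact-open topology and the improper tail integral) are exactly the ones the paper dispatches with ``in a standard manner.''
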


\begin{proof}
In view of~\eqref{smallcond}, the equation
\begin{equation}
\label{epseq}
\frac{2N}{\lambda}(L\rho+\varepsilon)=\rho
\end{equation}
has a unique solution~$\varepsilon$ given by
\begin{equation*}
\varepsilon:=\biggl(\frac{\lambda}{2N}-L\biggr)\rho>0.
\end{equation*}
Suppose that $y$ is a pseudosolution of~\eqref{pode} on $[0,\tau)$ for some $\tau\in(0,\infty]$ with $\sigma_y\leq\varepsilon$.
 Let $\mathcal C:=C([0,\tau),\mathbb R^n)$ denote the topological vector space of all continuous functions  $z\colon [0,\tau) \to \mathbb R^n$ equipped with the topology of uniform convergence on compact subsets of $[0,\tau)$. Let
\begin{equation}
\label{Sdef}
S:=\biggl \{\,z\in\mathcal C:  \ \text{$ \sup_{t\in[0,\tau)} |z(t)|\leq\rho$}\biggr \}.
\end{equation}
Clearly, $S$ is a nonempty, closed and convex subset of~$\mathcal C$.  For $z\in S$ and $t\in[0,\tau)$, set
\begin{align*}
(\mathcal F z)(t)&:=\int_0^t T(t,s)P(s)\left[\,f(s,z(s))-h_y(s)\,\right]\,ds\\
&\qquad-\int_t^\tau T(t,s)(I-P(s))\left[\,f(s,z(s))-h_y(s)\,\right]\,ds,
\end{align*}
with $h_y$ as in~\eqref{beq}.   
In view of \eqref{ed1}, \eqref{ed2}, \eqref{beq}, \eqref{hsmall} and~\eqref{sublingrowth}, we have for $z\in S$ and $t\in[0,\tau)$, \begin{align*}
|(\mathcal F z)(t)|&\le \int_0^t |T(t,s)P(s)|(\,L|z(s)|+|h_y(s)|\,)\,ds\\
&\qquad+\int_t^\tau |T(t,s)(I-P(s))|(\,L|z(s)|+|h_y(s)|\,)\,ds\\
&\leq\int_0^t Ne^{-\lambda(t-s)}(L\rho+\sigma_y)\,ds
+\int_t^\infty Ne^{-\lambda(s-t)}(L\rho+\sigma_y)\,ds\\
&\leq\frac{2N}{\lambda}(L\rho+\varepsilon)=\rho,
\end{align*}
where the last equality follows from~\eqref{epseq}.
Thus, $\mathcal F z$ is well-defined and $\mathcal F(S) \subset S$. 
It follows in a standard manner that $\mathcal F\colon S\rightarrow S$ is continuous. In view of~\eqref{Sdef}, the functions from the image set $\mathcal F(S)\subset S$ are uniformly bounded on $[0,\tau)$.  Take now an arbitrary compact subinterval $I\subset [0, \tau)$ and $z\in S$. It follows by differentiation that 
\begin{equation}\label{aux}
(\mathcal F z)'(t)=A(t)(\mathcal F z)(t)+f(t,z(t))-h_y(t), \qquad t\in[0,\tau).
\end{equation}
Hence, 
\[
\sup_{t\in I} |(\mathcal F z)'(t)| \le  \rho\max_{t\in I} |A(t)|+L\rho+\varepsilon.
\] 
We conclude that the derivatives of the functions in $\mathcal F(S)$ are uniformly bounded on each compact subinterval of $[0, \tau)$,
which implies that the functions in $\mathcal F(S)$ are  equicontinuous on every compact subinterval of $[0,\tau)$. Therefore, the closure of $\mathcal F(S)$ is compact. 
By the application of the Schauder--Tychonoff fixed point theorem (see, e.g., \cite[Chap.~I, p.~9]{Cop}), we conclude that there exists $z\in S$ such that $z=\mathcal F z$. From~\eqref{aux}, it follows that
  \begin{equation*}
z'(t)=A(t)z(t)+f(t,z(t))-h_y(t), \qquad t\in[0,\tau).
\end{equation*}
Hence (see~\eqref{beq}),  $h_z=h_y$ identically on $[0,\tau)$, and hence~\eqref{samerror} holds. Finally, \eqref{Sdef} shows that $z$ lies in $B_\rho(0)$.
 \end{proof}

In the following result, we point out another simple consequence of Theorems~\ref{T1} and~\ref{T2}.
\begin{corollary}\label{cor: general perturbation}
\label{thm4}
Suppose that Eq.~\eqref{lde} has an exponential dichotomy on $[0,\infty)$.
Assume that $f\colon [0, \infty)\times \mathbb R^n \to \mathbb R^n$ is continuously differentiable and that there exist $L_1\ge 0$ and $L_2>0$ such that 
\begin{equation}
	\label{lingrowth}
|f_x(t,x)|\leq L_1+L_2|x|\qquad\text{for all $t\geq0$ and $x\in\mathbb R^n.$}
\end{equation}
If 
\begin{equation}
\label{modsmallcond}
L_1<\frac{\lambda}{2N}
\end{equation} 
and 
\begin{equation}
\label{rhocond}
0<\rho<\frac{1}{L_2}\biggl(\frac{\lambda}{2N}-L_1\biggr),
\end{equation}
then Eq.~\eqref{pode} has the conditional Lipschitz shadowing property in $B_\rho(0)$.
Moreover, if instead of the existence of an exponential dichotomy, we assume that Eq.~\eqref{lde} has an exponential contraction or exponential expansion on $[0,\infty)$, then 
the conditional Lipschitz shadowing property of Eq.~\eqref{pode} in $B_\rho(0)$ holds under the weaker conditions
\begin{equation}\label{new1}
L_1<\frac{\lambda}{N}
\end{equation}
and
\begin{equation}\label{new2}
0<\rho<\frac{1}{L_2}\biggl(\frac{\lambda}{N}-L_1\biggr).
\end{equation}
\end{corollary}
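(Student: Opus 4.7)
The plan is to reduce Corollary~\ref{cor: general perturbation} to Theorem~\ref{T1} (and to Theorem~\ref{T2} in the contraction/expansion case) by taking $H=B_\rho(0)$ and carefully choosing the neighborhood size $\delta>0$ so that the growth estimate \eqref{lingrowth} produces a local Lipschitz constant for $f$ strictly below $\lambda/(2N)$ (resp.\ $\lambda/N$).

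First, observe that with $H=B_\rho(0)$ one has $\mathcal N_\delta(H)=B_{\rho+\delta}(0)$, which is convex, so $\operatorname{conv}(\mathcal N_\delta(H))=B_{\rho+\delta}(0)$. Hence for every $x\in\operatorname{conv}(\mathcal N_\delta(H))$ and $t\ge 0$, the hypothesis \eqref{lingrowth} yields
\begin{equation*}
|f_x(t,x)|\le L_1+L_2|x|\le L_1+L_2(\rho+\delta).
\end{equation*}
By the remark following Theorem~\ref{T1} (i.e.\ a mean value argument on the convex set $B_{\rho+\delta}(0)$), $f$ satisfies the Lipschitz bound \eqref{lipsch} on $\mathcal N_\delta(H)$ with constant
\begin{equation*}
L:=L_1+L_2(\rho+\delta).
\end{equation*}

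The heart of the argument is to arrange $L<\lambda/(2N)$. This is equivalent to the inequality $L_2\delta<\tfrac{\lambda}{2N}-L_1-L_2\rho$. The right-hand side equals $L_2\bigl[\tfrac{1}{L_2}\bigl(\tfrac{\lambda}{2N}-L_1\bigr)-\rho\bigr]$, which is strictly positive precisely by the hypothesis \eqref{rhocond}. Consequently there exists $\delta>0$ such that $L<\lambda/(2N)$, and for this choice all assumptions of Theorem~\ref{T1} are satisfied. Applying Theorem~\ref{T1} yields the conditional Lipschitz shadowing property of Eq.~\eqref{pode} in $H=B_\rho(0)$.

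For the improved statement under an exponential contraction or expansion, the same scheme is carried out with $\lambda/N$ in place of $\lambda/(2N)$: condition \eqref{new1} guarantees $\tfrac{\lambda}{N}-L_1>0$, and condition \eqref{new2} is exactly what is needed to select $\delta>0$ with $L_1+L_2(\rho+\delta)<\lambda/N$. One then invokes Theorem~\ref{T2} in place of Theorem~\ref{T1}. There is no genuine obstacle beyond this bookkeeping: the whole point is the matching between the strict inequality \eqref{rhocond} (or \eqref{new2}) and the strict inequality needed between the local Lipschitz constant $L$ and the dichotomy threshold; the pre-existing theorems do the analytic work.
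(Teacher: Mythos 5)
Your proposal is correct and follows essentially the same route as the paper: set $H=B_\rho(0)$, note $\mathcal N_\delta(H)=B_{\rho+\delta}(0)$, use the strict inequality \eqref{rhocond} (resp.\ \eqref{new2}) to pick $\delta>0$ with $L:=L_1+L_2(\rho+\delta)<\lambda/(2N)$ (resp.\ $<\lambda/N$), convert the derivative bound into a Lipschitz bound via the mean value inequality on the convex ball, and invoke Theorem~\ref{T1} (resp.\ Theorem~\ref{T2}). The paper phrases this through the intermediate Corollary~\ref{thm2}, but that is merely a restatement of the same application.
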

\begin{proof} We give a proof of the first statement of the corollary. The proof of the second statement is similar and hence it is omitted.

In view of~\eqref{rhocond}, we have that 
\begin{equation*}
L_1+L_2\rho<\frac{\lambda}{2N}.
\end{equation*}
Choose $\delta>0$ such that
\begin{equation*}
L_1+L_2(\rho+\delta)<\frac{\lambda}{2N}.
\end{equation*}
From this and~\eqref{lingrowth}, we have
	for all $t\geq0$ and $x\in B_{\rho+\delta}(0)$,
	\begin{equation*}
|f_x(t,x)|\leq L_1+L_2|x|\leq L_1+L_2(\rho+\delta)<\frac{\lambda}{2N}. 
\end{equation*}
This implies that condition~\eqref{balllipsch} of Corollary~\ref{thm2} holds with $L:=L_1+L_2(\rho+\delta)$. Since~$L$ satisfies~\eqref{smallcond}, the Lipschitz shadowing property of Eq.~\eqref{pode} in $B_\rho(0)$ follows from the first statement of Corollary~\ref{thm2}.
\end{proof}

In the following example, we show the importance and the sharpness of the assumptions of Corollary~\ref{cor: general perturbation}.
\begin{example}\label{EXM}
Consider the scalar autonomous equation
\begin{equation}
\label{speceq}
x'=-x-x^2-\frac{1}{4}
=-\biggl(x+\frac{1}{2}\biggr)^2, 
\end{equation}
which is a  special case of~\eqref{pode} when $n=1$, $A(t)=-1$ and $f(t,x)=-x^2-\frac{1}{4}$ for $t\in [0, \infty)$ and $x\in\mathbb R$. Its linear part $x'=-x$ admits an exponential contraction with $T(t,s)=e^{-(t-s)}$, $N=1$ and $\lambda=1$. Moreover, $f$ satisfies~\eqref{lingrowth} with $L_1=0$ and $L_2=2$. Therefore, conditions~\eqref{new1} and~\eqref{new2} of Corollary~\ref{cor: general perturbation} reduce to $0<\rho<1/2$. It follows from Corollary~\ref{thm4} that~\eqref{speceq} has the conditional Lipschitz shadowing in $B_\rho(0)=[-\rho, \rho]$ for any $\rho \in (0, 1/2)$.
We will show the importance of the condition $\rho<1/2$ by proving that the conditional Lipschitz shadowing property for Eq.~\eqref{speceq} in $B_{1/2}(0)=[-1/2, 1/2]$ does not hold. Suppose, for the sake of contradiction, that~\eqref{speceq} has the conditional Lipschitz shadowing property in $B_{1/2}(0)$ with some constants $\varepsilon_0, \kappa >0$. Choose
\begin{equation}
\label{deltasmall}
\delta\in\left(0,\min\bigl\{1, \varepsilon_0^{1/2}, \kappa^{-1}\bigr\}\right)
\end{equation}
so that
\begin{equation}
\label{smallcons}
\varepsilon:=\delta^2<\varepsilon_0 \qquad\text{and}\qquad \kappa \delta^2<\delta.
\end{equation}
Let $y$ denote the unique solution of the initial value problem
\begin{equation}
\label{deltaeq}
y'=-\biggl(y+\frac{1}{2}\biggr)^2+\delta^2,\qquad y(0)=-\frac{1}{2}.
\end{equation}
Observe that 
\begin{equation}
\label{yformula}
y(t)=-\frac{1}{2}+\frac{1-e^{-2\delta t}}{1+e^{-2\delta t}}\delta\qquad\text{for $t\ge 0$}.
\end{equation}
Hence,
\begin{equation}
\label{ylim}
\lim_{t\to\infty}y(t)=-\frac{1}{2}+\delta,
\end{equation}
while~\eqref{deltasmall}, \eqref{smallcons}, \eqref{deltaeq} and~\eqref{yformula} imply that
\begin{equation}
\label{ybounds}
-\frac{1}{2}\leq y(t)\le -\frac{1}{2}+\delta<\frac{1}{2}\qquad\text{for all $t\in[0,\infty)$,}
\end{equation}
and
\begin{equation}
\label{epsapprox}
\sigma_y=\sup_{t\geq0}\biggl|\,y'(t)+\biggl(y(t)+\frac{1}{2}\biggr)^2\,\biggr|=\delta^2=\varepsilon.
\end{equation}
This shows that $y\colon [0, \infty) \to \mathbb R$ is a pseudosolution of~\eqref{speceq} on $[0,\infty)$ such that $\sigma_y=\varepsilon<\varepsilon_0$ (see~\eqref{smallcons}) and $y(t)\in B_{1/2}(0)$ for $t\ge 0$. Hence, there exists a solution $x\colon [0, \infty)\to \mathbb R$ of~\eqref{speceq}  such that
\begin{equation}
\label{deltaclosesol}
	\sup_{t\geq0}|x(t)-y(t)|\leq \kappa \varepsilon=\kappa \delta^2.
\end{equation}
It follows by elementary calculations that if $c\in\mathbb R$, then the unique noncontinuable solution $x$ of~\eqref{speceq} with $x(0)=c$ is given by
\begin{equation*}
\label{solformula}
x(t)=-\frac{1}{2}+\frac{\gamma_c}{\gamma_c t+1}\qquad\text{with $\gamma_c:=c+\frac{1}{2}$}
\end{equation*}
for $t\in I_c$, where $I_c=(-\infty,\infty)$ for $c=-\frac{1}{2}$, $I_c=\bigl(-\frac{1}{\gamma_c},\infty\bigr)$ for $c>-\frac{1}{2}$ and
$I_c=\bigl(-\infty,-\frac{1}{\gamma_c}\bigr)$ for $c<-\frac{1}{2}$. Since  the solution $x$ satisfying~\eqref{deltaclosesol} is defined on $[0, \infty)$, the last possibility is excluded, and thus $\lim_{t\to\infty}x(t)=-\frac{1}{2}$. Hence (see~\eqref{ylim}),  $\lim_{t\to\infty}(y(t)-x(t))=\delta$. This, together with~\eqref{smallcons} and~\eqref{deltaclosesol} implies that 
\begin{equation*}
\delta=\lim_{t\to\infty}|x(t)-y(t)|\leq\sup_{t\geq0}|x(t)-y(t)|\leq \kappa \delta^2<\delta,
\end{equation*}
which is a contradiction.  Thus, \eqref{speceq} does not have the conditional Lipschitz shadowing property in $B_{1/2}(0)$. 

\end{example}

\begin{remark}\label{remark: general perturbations}
It is easy to see that a conclusion similar to the one obtained in Corollary \ref{cor: general perturbation} holds true for more general perturbations of Eq.~\eqref{lde}. For instance, if Eq.~\eqref{lde} has an exponential dichotomy on $[0,\infty)$, $L_1$ satisfies \eqref{modsmallcond} and $f$ is such that 
\begin{equation*}
	|f_x(t,x)|\leq L_1+L_2|x|+L_3|x|^2+\ldots+L_{k+1}|x|^k\qquad\text{for all $t\geq0$ and $x\in\mathbb R^n$,}
\end{equation*}
with $k\in \mathbb{N}$ and $L_j\geq 0$ for every $j\in \{1,\ldots, k+1\}$, then for $\rho>0$ small enough such that \[L_1+L_2\rho+L_3\rho^2+\ldots+L_{k+1}\rho^k< \frac{\lambda}{2N},\]
Eq.~\eqref{pode} has the conditional Lipschitz shadowing property in $B_\rho(0)$.
\end{remark}

\section{Conditional Lipschitz shadowing via the logarithmic norm}\label{lnorms}
In this section, we give sufficient conditions for the conditional Lipschitz shadowing of Eq.~\eqref{ODE}. These conditions will be formulated in terms of the logarithmic norm~$\mu$ defined by~\eqref{lognorm}. Let us recall some useful properties of~$\mu$ from
~\cite[p.~41]{Cop}. For every $\alpha\geq0$ and $A, B\in \mathbb R^{n\times n}$, we have 
\begin{align}
\mu(\alpha A)&=\alpha\mu(A),\label{muposhom}\\
|\mu(A)|&\leq|A|,\label{munormineq}\\	
\mu(A+B)&\leq\mu(A)+\mu(B),\label{musub}\\
|\mu(A)-\mu(B)|&\leq|A-B|.\label{mucont}
\end{align}
The values of $|A|$ and $\mu(A)$ for the most commonly used norms
\begin{equation*}
|x|_\infty=\max_i|x_i|,\qquad	
|x|_1=\sum_i|x_i|,\qquad
|x|_2=\biggl(\sum_i|x_i|^2\biggr)^{1/2},
\end{equation*}
in $\mathbb R^n$ are given by
\begin{equation*}
|A|_\infty=\max_i\sum_k|a_{ik}|,\qquad	
|A|_1=\max_k\sum_i|a_{ik}|,\qquad	
|A|_2=\sqrt{s(A^TA)},
\end{equation*}
and
	\begin{equation*}
\mu_\infty(A)=\max_i\biggl(a_{ii}+\sum_{k,\,k\neq i}|a_{ik}|\biggr),\quad
\mu_1(A)=\max_k\biggl(a_{kk}+\sum_{i,\,i\neq k}|a_{ik}|\biggr),\quad
\mu_2(A)=s\biggl(\frac{A^T+A}{2}\biggr),
\end{equation*}
where $s(A^TA)$ and $s((A^T+A)/2)$ are the largest (real) eigenvalue of $A^TA$ and $(A^T+A)/2$, respectively.
We will also need the following auxiliary result.
\begin{lemma}
For any continuous map $M\colon[0,1]\rightarrow\mathbb R^{n\times n}$, we have that 
\begin{equation}
\label{muintineq}
\mu\biggl(\int_0^1 M(s)\,ds\biggr)\leq\int_0^1\mu\left(M(s)\right)\,ds.
\end{equation}
\end{lemma}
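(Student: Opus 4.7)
The plan is to prove \eqref{muintineq} by approximating the integral by Riemann sums and exploiting the positive homogeneity \eqref{muposhom} and subadditivity \eqref{musub} of~$\mu$, then passing to the limit using the continuity property \eqref{mucont}. The inequality is essentially the integral version of the fact that $\mu$ is sublinear on the cone of nonnegative scalar multiples, so the Riemann sum viewpoint is the natural route.

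More concretely, for each positive integer~$N$, I would consider the uniform partition of $[0,1]$ into $N$ subintervals of length $1/N$ with sample points $s_i^{(N)}=i/N$ for $i=1,\dots,N$, and form the Riemann sum
\[
S_N:=\sum_{i=1}^{N}\frac{1}{N}\,M\!\left(s_i^{(N)}\right).
\]
By the subadditivity property \eqref{musub}, applied inductively to the $N$ summands, we have $\mu(S_N)\leq \sum_{i=1}^{N}\mu\!\left(\frac{1}{N}M(s_i^{(N)})\right)$, and then by the positive homogeneity \eqref{muposhom} (with $\alpha=1/N\geq 0$) this equals $\sum_{i=1}^{N}\frac{1}{N}\,\mu\!\left(M(s_i^{(N)})\right)$. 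Thus
\[
\mu(S_N)\leq \sum_{i=1}^{N}\frac{1}{N}\,\mu\!\left(M(s_i^{(N)})\right).
\]

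Next I would pass to the limit $N\to\infty$. Since $M$ is continuous on the compact interval $[0,1]$, it is uniformly continuous, so $S_N\to \int_0^1 M(s)\,ds$ in the matrix norm $|\cdot|$. By the Lipschitz-type continuity property \eqref{mucont}, we have $|\mu(S_N)-\mu(\int_0^1 M(s)\,ds)|\leq |S_N-\int_0^1 M(s)\,ds|\to 0$, so the left-hand side converges to $\mu\!\left(\int_0^1 M(s)\,ds\right)$. At the same time, \eqref{mucont} implies that $s\mapsto \mu(M(s))$ is continuous on $[0,1]$ (being the composition of the continuous map $M$ with the $1$-Lipschitz function $\mu$), so the right-hand sums are Riemann sums of a continuous function and converge to $\int_0^1 \mu(M(s))\,ds$. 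Passing to the limit in the inequality yields \eqref{muintineq}.

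There is no real obstacle here; the only subtlety is to make sure one uses only the two properties that are actually valid for $\mu$, namely positive homogeneity and subadditivity (full linearity is \emph{not} available, and homogeneity fails for negative scalars), and to invoke \eqref{mucont} both to conclude that $\mu(S_N)\to\mu(\int_0^1 M)$ and that $s\mapsto\mu(M(s))$ is continuous so that its Riemann sums converge.
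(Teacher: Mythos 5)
Your proof is correct and follows essentially the same route as the paper: apply subadditivity and positive homogeneity of $\mu$ to Riemann sums, then pass to the limit using the Lipschitz continuity property \eqref{mucont} on both sides. The only cosmetic difference is that you fix uniform partitions with right endpoints while the paper allows arbitrary partitions with mesh tending to zero; this does not change the substance of the argument.
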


\begin{proof}
The existence of the integral on the right-hand side of~\eqref{muintineq} is a consequence of the continuity of $\mu:\mathbb R^{n\times n}\rightarrow\mathbb R$ (see~\eqref{mucont}).
In order to prove~\eqref{muintineq}, observe that the subadditivity and positive homogeneity of~$\mu$ (see~\eqref{muposhom} and~\eqref{musub}), applied to the integral sums $\sum_{i=1}^k M(s_i)\Delta s_i$,
where $0=s_0<s_1<\dots<s_k=1$ is a partition of $[0,1]$ and $\Delta s_i:=s_i-s_{i-1}$ for $i=1,\dots,k$, imply that
\begin{equation*}
\mu\biggl(\sum_{i=1}^k M(s_i)\Delta s_i\biggr)\leq\sum_{i=1}^k\mu\left(M(s_i)\right)\Delta s_i.
\end{equation*}
Letting $\Delta:=\max_{1\leq i\leq k}\Delta s_i\rightarrow0$ and using the continuity of~$\mu$ again, we conclude that~\eqref{muintineq} holds.
\end{proof}

Now we can state and prove the main result of this section which provides a sufficient condition under which Eq.~\eqref{ODE} has the conditional Lipschitz shadowing property in a given set $H\subset\mathbb R^n$.
\begin{theorem}
\label{globcrit}
Let $\emptyset\neq H\subset\mathbb R^n$ and suppose that~$g\colon [0, \infty) \times \mathbb R^n\rightarrow\mathbb R^n$ is a continuously differentiable function. If there exist $\delta>0$ and $m>0$ such that
	\begin{equation}
	\label{munegbound}
\mu(g_x(t, x))\leq-m\qquad\text{for all $t\ge 0$ and $x\in\mathcal N_\delta(H)$},
\end{equation}
then Eq.~\eqref{ODE} has the conditional Lipschitz shadowing property in~$H$.
\end{theorem}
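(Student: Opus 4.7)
The plan is to take as the shadowing solution $x$ the unique noncontinuable solution of the initial value problem $x'=g(t,x)$, $x(0)=y(0)$, and to control the difference $z:=x-y$ via the logarithmic norm of the variational linearization along $z$. Setting $r(t):=y'(t)-g(t,y(t))$ so that $|r(t)|=e_y(t)\le\varepsilon$, and applying the fundamental theorem of calculus to $s\mapsto g(t,y(t)+sz(t))$, I first derive the identity
\begin{equation*}
z'(t)=B(t)z(t)-r(t),\qquad B(t):=\int_0^1 g_x(t,y(t)+sz(t))\,ds.
\end{equation*}

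The key geometric observation is that as long as $|z(t)|\le\delta$, every point of the segment $\{y(t)+sz(t):s\in[0,1]\}$ lies in $B_\delta(y(t))\subset\mathcal N_\delta(H)$, because $y(t)\in H$. Hence the hypothesis \eqref{munegbound} combined with the lemma \eqref{muintineq} yields $\mu(B(t))\le -m$ on any such interval. Using the one-sided differential inequality
\begin{equation*}
D^+|z(t)|\le \mu(B(t))|z(t)|+|r(t)|,
\end{equation*}
which follows from $|z(t+h)|\le|I+hB(t)|\,|z(t)|+h|r(t)|+o(h)$ as $h\to 0^+$, together with $z(0)=0$ and a Gr\"onwall-type comparison with the scalar ODE $\phi'=-m\phi+\varepsilon$, $\phi(0)=0$, I would obtain
\begin{equation*}
|z(t)|\le \int_0^t e^{-m(t-s)}|r(s)|\,ds\le\frac{\varepsilon}{m}\bigl(1-e^{-mt}\bigr)\le\frac{\varepsilon}{m}.
\end{equation*}

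To close the argument I would set $\kappa:=1/m$, $\varepsilon_0:=m\delta/2$, and run a bootstrap. Let $[0,T)$ with $T\le\tau$ be the maximal interval of existence of $x$, and let $T^*:=\sup\{t\in[0,T):|z(s)|\le\delta\text{ for all }s\in[0,t]\}$. On $[0,T^*)$ the preceding estimate gives $|z(t)|\le\varepsilon/m\le\delta/2$, strictly below the threshold $\delta$; by continuity this excludes $T^*<T$, so $T^*=T$. If $T<\tau$, then $y$ is continuous on the compact interval $[0,T]$ and $|z(t)|\le\delta$ on $[0,T)$, so $x$ is bounded on $[0,T)$ and standard extension results allow it to be continued past $T$, contradicting maximality. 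Hence $T=\tau$, the bound holds on all of $[0,\tau)$, and the conditional Lipschitz shadowing property follows with $\kappa=1/m$.

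The step I expect to be the main obstacle is precisely this bootstrap. The estimate on $|z|$ rests on the bound $\mu(B(t))\le -m$, which is only conditionally available while $|z(t)|\le\delta$, yet $B$ itself depends nonlocally on $z$; the crucial point is that the strict inequality $\varepsilon_0/m<\delta$ makes the conditional estimate sharpen its own hypothesis, closing the loop. The Dini inequality and the Gr\"onwall-type comparison are classical facts from the theory of logarithmic norms, but they must be invoked with care since $B$ and $r$ are only continuous.
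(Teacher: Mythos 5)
Your proposal is correct and follows essentially the same route as the paper: the same decomposition $z'=B(t)z-r(t)$ with $B(t)=\int_0^1 g_x(t,y(t)+sz(t))\,ds$, the same integral inequality for $\mu$, and the same continuation argument to extend $x$ up to $\tau$. The only (harmless) technical differences are that you bound $|z|$ via the Dini-derivative inequality $D^+|z|\le\mu(B)|z|+|r|$ and a Gr\"onwall comparison rather than via Coppel's inequality on the transition matrix plus variation of constants, and you take $\varepsilon_0=m\delta/2$ to close the bootstrap with a strict margin where the paper takes $\varepsilon_0=m\delta$ and extracts strictness from the factor $1-e^{-mt_1}<1$.
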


\begin{proof}
We will show that Eq.~\eqref{ODE} has the conditional Lipschitz shadowing property in~$H$ with $\varepsilon_0=m\delta$ and $\kappa=m^{-1}$. Suppose that $0<\varepsilon\leq\varepsilon_0=m\delta$ and $y$ is a pseudosolution of Eq.~\eqref{ODE} on $[0,\tau)$ for some $\tau\in(0,\infty]$ such that $\sigma_y=\sup_{0\leq t<\tau}|y'(t)-g(t, y(t))|\leq\varepsilon$ and $y(t)\in H$ for all $t\in[0,\tau)$. Let $x$ be the noncontinuable solution of Eq.~\eqref{ODE} with initial value $x(0)=y(0)$. It is known (see, e.g., \cite[Chap.~I (III), p.~16]{Cop})	
 that $x$ is defined on $[0,\sigma)$ for some $\sigma\in(0,\infty]$
and $\sigma=\infty$ whenever $x$ is bounded. Let $\omega:=\min\{\sigma,\tau\}$. Define
\begin{equation}
\label{zdef}
z(t):=x(t)-y(t)\qquad\text{for $t\in[0,\omega)$}.
\end{equation}
We claim that
\begin{equation}
\label{zbound}
|z(t)|<\frac{\varepsilon}{m}\qquad\text{for all $t\in[0,\omega)$}.
\end{equation}
Suppose, for the sake of contradiction, that~\eqref{zbound} does not hold. Since $z(0)=x(0)-y(0)=0$, there exists $t_1\in(0,\omega)$ such that
\begin{equation}
\label{tonedef}
|z(t)|<\frac{\varepsilon}{m}\quad\text{for all $t\in[0,t_1)$}\qquad\text{and}\qquad |z(t_1)|=\frac{\varepsilon}{m}.
\end{equation}
From~\eqref{ODE} and~\eqref{zdef}, we find for $t\in[0,\omega)$,
\begin{equation*}
z'(t)=g(t, y(t)+z(t))-y'(t)=g(t, y(t)+z(t))-g(t, y(t))+k_y(t),
\end{equation*}
where
\begin{equation*}
k_y(t):=g(t, y(t))-y'(t)\qquad\text{for $t\in[0,\omega)$}. 
\end{equation*}
Hence
\begin{equation}
\label{zdiffeq}
z'(t)=A(t)z(t)+k_y(t)\qquad\text{for $t\in[0,\omega)$},
\end{equation}
where
\begin{equation}
\label{Adef}
A(t):=\int_0^1 g_x(t, y(t)+sz(t))\,ds\qquad\text{for $t\in[0,\omega)$}.
\end{equation}
From~\eqref{muintineq} and~\eqref{Adef}, we obtain
\begin{equation}
\label{muAineq}
\mu\left(A(t)\right)\leq\int_0^1 \mu(g_x(t, y(t)+sz(t)))\,ds \qquad\text{for $t\in[0,\omega)$}.
\end{equation}
Let $t\in[0,t_1]$ be fixed. In view of~\eqref{tonedef}, for every $s\in[0,1]$, we have
\begin{equation*}
|\left(y(t)+sz(t)\right)-y(t)|=s|z(t)|\leq|z(t)|\leq\frac{\varepsilon}{m}\leq\frac{\varepsilon_0}{m}=\delta.
\end{equation*}
Hence, for every $t\in[0,t_1]$ and $s\in[0,1]$, we have that $y(t)\in H$ and 
$y(t)+sz(t)\in B_\delta(y(t))\subset\mathcal N_\delta(H)$.
This, together with~\eqref{munegbound} and~\eqref{muAineq}, yields
\begin{equation}
\label{mukeyneq}
\mu\left(A(t)\right)\leq-m\qquad\text{for all $t\in[0,t_1]$}.
\end{equation}
Let $T(t,s)$ denote the transition matrix 
of the homogeneous linear differential equation~\eqref{lde}, where $A(t)$ is given by~\eqref{Adef}. Then, for every $s\in[0,\omega)$ and $\xi\in\mathbb R^n$, the solution of Eq.~\eqref{lde} with initial value~$\xi$ at $t=s$ is given by $x(t)=T(t,s)\xi$ for $t\in[0,\omega)$.
By Coppel's inequality \cite[Chap.~III, Theorem~3, p.~58]{Cop}, we have for $0\leq s\leq t<\omega$,
\begin{equation*}
|T(t,s)\xi|\leq\exp\biggl(\int_s^t\mu(A(u))\,du\biggr)|\xi|.
\end{equation*}
Since $\xi\in\mathbb R^n$ was arbitrary, this implies that
\begin{equation}
\label{transmatgrowth}
|T(t,s)|=\sup_{0\neq\xi\in\mathbb R^n}\frac{|T(t,s)\xi|}{|\xi|}\leq\exp\biggl(\int_s^t\mu(A(u))\biggr)\,du
\quad\text{whenever $0\leq s\leq t<\omega$}.
\end{equation}
Since $z$ is a solution of the nonhomogeneous equation~\eqref{zdiffeq} with initial value $z(0)=0$, by the variation of constants formula, we have
\begin{equation*}
z(t)=\int_0^t T(t,s)k_y(s)\,ds\qquad\text{for all $t\in[0,\omega)$}.
\end{equation*}
From this, \eqref{mukeyneq} and~\eqref{transmatgrowth}, and taking into account that $\sup_{0\leq t<\omega}|k_y(t)|\leq\sigma_y\leq\varepsilon$, we obtain
\begin{align*}
|z(t_1)|&\leq\int_0^{t_1} |T(t_1,s)||k_y(s)|\,ds\leq\varepsilon\int_0^{t_1} \exp\biggl(\int_s^{t_1}\mu(A(u))\biggr)\,du\\
&\leq\varepsilon\int_0^{t_1}e^{-m(t_1-s)}\,ds=\frac{\varepsilon}{m}(1-e^{-mt_1})<\frac{\varepsilon}{m}.
\end{align*}
This contradicts~\eqref{tonedef} and hence~\eqref{zbound} holds.

 Next we show that $\sigma\geq\tau$. Otherwise, $0<\sigma<\tau$ and hence $\omega=\sigma$. This, together with~\eqref{zdef} and \eqref{zbound}, implies that for all $t\in[0,\sigma)$, 
\begin{equation*}
|x(t)|=|y(t)+z(t)|\leq|y(t)|+|z(t)|\leq\max_{0\leq t\leq\sigma}|y(t)|+\frac{\varepsilon}{m}.
\end{equation*}
Consequently, $x$ is bounded on $[0,\sigma)$ and hence $\sigma=\infty$ contradicting the fact that $\sigma<\tau$. Thus, $\sigma\geq\tau$ and hence $\omega=\tau$. This, together with~\eqref{zdef} and \eqref{zbound}, implies that condition~\eqref{closesol} is satisfied with $\kappa=m^{-1}$. The proof of the theorem is completed.
\end{proof}

\begin{example} (Example~\ref{EXM} revisited) \label{example 1 revisited}
We note that Eq.~\eqref{speceq} is a special case of~\eqref{ODE} with
\begin{equation*}
g(t,x)=-\bigg (x+\frac  1 2\bigg )^2, \qquad  t\ge 0, \ x\in \mathbb R.
\end{equation*}
In Example~\ref{EXM} we have shown that Eq.~\eqref{speceq} has the conditional Lipschitz shadowing property in $[-\rho, \rho]$,  for every $0<\rho <\frac 1 2$.
From Theorem~\ref{globcrit}, we can deduce a stronger result showing that the interval $[-\rho,\rho]$ with 
$\rho\in\left(0,\frac{1}{2}\right)$ can be replaced with the larger interval $[-\rho,\infty)$. Indeed, 
as already noted, in the scalar case, we have that $\mu(A)=A$, and hence condition~\eqref{munegbound} reduces to
	\begin{equation}
	\label{scalmunegbound}
g_x(t,x)\leq-m<0\qquad\text{for all $x\in\mathcal N_\delta(H)$}.
\end{equation}
Let $H:=[-\rho,\infty)$, where $0<\rho<\frac{1}{2}$. Choose $\delta\in(0,\frac{1}{2}-\rho)$. Then, for all $x\in\mathcal N_\delta(H)=[-\rho-\delta,\infty)$, 
	\begin{equation*}
g_x(t,x)=-2\biggl(x+\frac{1}{2}\biggr)\leq-2\biggl(-\rho-\delta+\frac{1}{2}\biggr)<0, 
\end{equation*}
which shows that condition~\eqref{scalmunegbound} is satisfied with $m:=2(-\rho-\delta+\frac{1}{2})>0$. By the application of Theorem~\ref{globcrit}, we conclude that, for every $\rho\in (0,\frac{1}{2})$, Eq.~\eqref{speceq} has the 
conditional Lipschitz shadowing property in $[-\rho,+\infty)$. Since the result obtained in Example~\ref{EXM} implies that the conditional Lipschitz shadowing property for~\eqref{speceq} in $[-\frac{1}{2},+\infty)$ does not hold, this is the best result which can be achieved.
\end{example}

The following corollary of Theorem~\ref{globcrit} for $H=\mathbb R^n$ provides a new criterion for the standard Lipschitz shadowing property of Eq.~\eqref{ODE} and hence it is interesting itself.

\begin{corollary}\label{abb}
Suppose that~$g\colon [0, \infty) \times \mathbb R^n\rightarrow\mathbb R^n$ is a continuously differentiable function such that for some $m>0$,
\begin{equation*}
\mu(g_x(t, x))\leq-m\qquad\text{for all $t\ge 0$ and $x\in\mathbb R^n$}.
\end{equation*}
Then, Eq.~\eqref{ODE} has the  Lipschitz shadowing property.
\end{corollary}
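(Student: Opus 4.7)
The plan is to derive this as an immediate special case of Theorem~\ref{globcrit} applied with the set $H=\mathbb R^n$. The first step is to observe that for every $\delta>0$ the closed $\delta$-neighborhood of $\mathbb R^n$ is $\mathcal N_\delta(\mathbb R^n)=\mathbb R^n$ itself, so the hypothesis of the corollary --- namely $\mu(g_x(t,x))\leq -m$ for all $t\geq 0$ and $x\in\mathbb R^n$ --- coincides verbatim with condition~\eqref{munegbound} of Theorem~\ref{globcrit} when the latter is applied with $H=\mathbb R^n$ and any fixed $\delta>0$ (the value of $\delta$ is irrelevant; one may simply take $\delta=1$).

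The second step is to invoke Theorem~\ref{globcrit} to conclude that Eq.~\eqref{ODE} has the conditional Lipschitz shadowing property in $H=\mathbb R^n$. Finally, since every pseudosolution $y\colon[0,\tau)\to\mathbb R^n$ of Eq.~\eqref{ODE} automatically satisfies $y(t)\in\mathbb R^n$ for all $t\in[0,\tau)$, the extra requirement in Definition~\ref{def: conditional shadowing} becomes vacuous, so the conditional Lipschitz shadowing property in $H=\mathbb R^n$ coincides with the standard Lipschitz shadowing property (as already remarked immediately after Definition~\ref{def: conditional shadowing}). This yields the desired conclusion.

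There is no real obstacle here: the corollary is a bookkeeping consequence of Theorem~\ref{globcrit}, resting on the two elementary observations that $\mathcal N_\delta(\mathbb R^n)=\mathbb R^n$ for any $\delta>0$ and that unconditional Lipschitz shadowing is exactly the $H=\mathbb R^n$ instance of conditional Lipschitz shadowing. All of the genuine analytic content --- the use of Coppel's inequality together with the logarithmic-norm bound~\eqref{muintineq} on the Jacobian averaged along the segment $y(t)+sz(t)$, and the bootstrapping argument that controls the maximal interval of existence --- has already been carried out in the proof of Theorem~\ref{globcrit}, and no further work is required.
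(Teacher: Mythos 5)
Your proposal is correct and matches the paper's own reasoning: the paper presents Corollary~\ref{abb} precisely as the specialization of Theorem~\ref{globcrit} to $H=\mathbb R^n$, and your observations that $\mathcal N_\delta(\mathbb R^n)=\mathbb R^n$ and that conditional Lipschitz shadowing in $\mathbb R^n$ reduces to the ordinary Lipschitz shadowing property are exactly the (essentially trivial) bookkeeping the paper intends.
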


Now we present a simple corollary of Theorem~\ref{globcrit} for the autonomous equation 
\begin{equation}
\label{aut}
	x'=h(x),
\end{equation}
where $h\colon\mathbb R^n\rightarrow\mathbb R^n$ is continuously differentiable. 
\begin{corollary}\label{modglobcrit}
Let $H$ be a nonempty, bounded subset of~$\mathbb R^n$. If $h\colon  \mathbb R^n\rightarrow\mathbb R^n$ is a continuously differentiable function such that
\begin{equation}
\label{musupbound}
\sup_{x\in H}\mu(h'( x))<0,
\end{equation}
then Eq.~\eqref{aut} has the conditional Lipschitz shadowing property in~$H$.
\end{corollary}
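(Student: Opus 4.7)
The plan is to deduce Corollary~\ref{modglobcrit} from Theorem~\ref{globcrit} applied to $g(t,x):=h(x)$. Since $g_x(t,x)=h'(x)$ is independent of~$t$, the only thing to verify is the existence of $\delta>0$ and $m>0$ such that $\mu(h'(x))\leq -m$ for every $x\in\mathcal N_\delta(H)$. In other words, the task reduces to promoting the strict pointwise negativity on~$H$ expressed by~\eqref{musupbound} to a uniform negative upper bound on a whole neighborhood of~$H$. This is the only real obstacle; the hypothesis of boundedness of~$H$ is exactly what makes this promotion possible.

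First I would observe that, by \eqref{mucont} and the continuous differentiability of~$h$, the map $x\mapsto\mu(h'(x))$ is continuous on~$\mathbb R^n$. Setting $c:=-\sup_{x\in H}\mu(h'(x))>0$, continuity gives $\mu(h'(\bar x))\leq -c$ for every $\bar x$ in the closure $\overline{H}$, so
\begin{equation*}
\max_{x\in\overline{H}}\mu(h'(x))\leq -c<0,
\end{equation*}
where the maximum is attained because $\overline{H}$ is compact (here we use that $H$ is bounded).

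Next, consider the closed set $K:=\overline{\mathcal N_1(H)}=\{x\in\mathbb R^n : d(x,\overline{H})\leq 1\}$, which is closed and bounded, hence compact. Since $\mu\circ h'$ is uniformly continuous on~$K$, there exists $\delta\in(0,1)$ such that
\begin{equation*}
|\mu(h'(x))-\mu(h'(y))|<\tfrac{c}{2}\qquad\text{whenever $x,y\in K$ and $|x-y|\leq\delta$.}
\end{equation*}
Given any $x\in\mathcal N_\delta(H)$, pick $y\in H$ with $|x-y|\leq\delta$; both $x$ and $y$ lie in~$K$, so
\begin{equation*}
\mu(h'(x))\leq\mu(h'(y))+\tfrac{c}{2}\leq -c+\tfrac{c}{2}=-\tfrac{c}{2}.
\end{equation*}
Thus \eqref{munegbound} holds with $m:=c/2>0$ and the chosen~$\delta$, and Theorem~\ref{globcrit} yields the conditional Lipschitz shadowing property of~\eqref{aut} in~$H$.
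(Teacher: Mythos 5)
Your proof is correct and follows essentially the same route as the paper: both deduce the corollary from Theorem~\ref{globcrit} by using the boundedness of~$H$ to obtain a compact set containing a $\delta$-neighborhood of~$H$, and then invoking uniform continuity of $\mu\circ h'$ (guaranteed by~\eqref{mucont} and the continuity of~$h'$) to convert the strict negativity on~$H$ into a uniform bound $\mu(h'(x))\leq -m<0$ on $\mathcal N_\delta(H)$. The paper works directly with the compact ball $B_{\rho+1}(0)\supset\mathcal N_\delta(H)$ and skips the intermediate passage to $\overline{H}$, which in your argument is a harmless but superfluous detour.
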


\begin{proof}
Eq.~\eqref{aut} is a special case of~\eqref{ODE} with $g(t,x):=h(x)$ for $t\geq0$ and $x\in\mathbb R^n$.
Choose $\varepsilon\in(0,-k)$, where $k:=\sup_{x\in H}\mu(h'(x))<0$ (see~\eqref{musupbound}). Since~$H$ is bounded, there exists $\rho>0$ such that $H\subset B_\rho(0)$. The continuity of~$\mu$ and~$h'$ implies that $\mu\circ h'$ is uniformly continuous on the compact set $B_{\rho+1}(0)$. Therefore, there exists $\delta\in(0,1)$ such that
\begin{equation}
\label{uncont}
|\mu(h'(x))-\mu(h'(\tilde x))|\leq\varepsilon\qquad\text{whenever $x,\tilde x\in B_{\rho+1}(0)$ and $|x-\tilde x|\leq\delta$}.
\end{equation}
Let $x\in\mathcal N_\delta(H)$. Then, there exists $\tilde x\in H$ such that $|x-\tilde x|\leq\delta$.
Hence, $\tilde x\in H\subset B_\rho(0)$ and $|x|\leq|\tilde x|+|x-\tilde x|\leq\rho+\delta<\rho+1$. Thus, we have that $x,\tilde x\in B_{\rho+1}(0)$ and $|x-\tilde x|\leq\delta$. From~\eqref{uncont} and the definition of~$k$, we obtain that 
\begin{equation*}
	\mu(g_x(t,x))=\mu(h'(x))\leq\mu(h'(\tilde x))+\varepsilon\leq k+\varepsilon.
\end{equation*}
Since $x\in\mathcal N_\delta(H)$ was arbitrary, condition~\eqref{munegbound} is satisfied with $m=-(k+\varepsilon)>0$. The desired conclusion now follows readily from Theorem~\ref{globcrit}.
\end{proof}

Finally, we illustrate 
the importance of assumption~\eqref{musupbound} of Corollary~\ref{modglobcrit} in a special case of a classic model from epidemiology.
\begin{example}\label{epi}
Consider the system
\begin{equation}
\label{SIspec} 
 \begin{split}
  S' &= 1-IS-S, \\[0mm]
  I' &=  IS-I,
 \end{split}
\end{equation}
which is a special case of the modified \emph{Kermack--McKendrick equation} (see~\cite[Chap.~2, Sec.~2.3, p.~53]{Li}). Biologically meaningful solutions are generated by initial data $(S(0),I(0))$ from the set
\begin{equation*}
\label{gammaset}
\Gamma:= \left\{(S,I) \in \mathbb{R}^2:S\geq0,\,I\geq0,\,S+I \le 1 \right\}.
\end{equation*}
For $c\in [0,1)$, define
\begin{equation*}
\label{gammacset}
\Gamma_c:= \left\{(S,I) \in \mathbb{R}^2:S\geq0,\,I\geq0,\,S+I \le 1-c\right\}.
\end{equation*}
Observe  that $\Gamma_0=\Gamma$ and $\Gamma_c\subset\Gamma_0$ for $c\in[0,1)$.
Eq.~\eqref{SIspec} is a special case of Eq.~\eqref{aut} where $h\colon\mathbb R^2\rightarrow\mathbb R^2$ is given by
\begin{equation*}
\label{choice}
h(S,I)=(1-IS-S,IS-I)^T\qquad\text{for $(S,I)^T\in\mathbb R^2$}. 
\end{equation*}
We will show that, for every $c\in(0,1)$, Eq.~\eqref{SIspec} has the conditional Lipschitz shadowing property in~$\Gamma_c$, but the same property in~$\Gamma_0$ does not hold. 
Evidently, $h$ is continuously differentiable and
\begin{equation*}
h'(S,I)=
 \begin{pmatrix}
-I-1 & -S \\
I & S-1 \\
\end{pmatrix}
\qquad\text{for $(S,I)^T\in\mathbb R^2$}.
\end{equation*}
Hence,
\begin{equation*}
\mu_\infty(h'(S,I))=\max\{\,-I-1+|S|,S-1+|I|\,\}
\qquad\text{for $(S,I)^T\in\mathbb R^2$}.
\end{equation*}
From this and the definition of~$\Gamma_c$, we obtain
\begin{equation}
\label{mugderineq}
\mu_\infty(h'(S,I))=S-1+I\leq-c\qquad\text{for all $(S,I)^T\in\Gamma_c$},
\end{equation}
where $c\in[0,1)$ is arbitrary.
This shows that if $c\in(0,1)$, then condition~\eqref{musupbound} of Corollary~\ref{modglobcrit} is satisfied with $\mu=\mu_\infty$ and $H=\Gamma_c$. By the application of 
Corollary~\ref{modglobcrit}, we conclude that, for every $c\in(0,1)$, Eq.~\eqref{SIspec} has the conditional Lipschitz shadowing property in~$\Gamma_c$. 

Next we show that the same property in $\Gamma_0$ does not hold. Suppose, for the sake of contradiction, that Eq.~\eqref{SIspec} has the conditional Lipschitz shadowing property in~$\Gamma_0$. Note that the definition of the conditional Lipschitz shadowing is independent of the norm used since all norms on $\mathbb R^n$ are equivalent. Therefore, we may (and do) use the infinity norm $|\cdot|_\infty$ on~$\mathbb R^2$. Let $\varepsilon_0, \kappa >0$ be the constants from the definition of the conditional Lipschitz shadowing property in~$\Gamma_0$. It is easily verified that for every $\varepsilon>0$, 
\begin{equation*}
	P_\varepsilon:=(1-\sqrt{\varepsilon},\sqrt{\varepsilon})^T\in\mathbb R^2
\end{equation*}
is an equilibrium of the system
\begin{equation*}
 \begin{split}
  S' &= 1-IS-S-\varepsilon, \\[0mm]
  I' &= IS - I+\varepsilon,
 \end{split} 
\end{equation*}
which is a perturbation of Eq.~\eqref{SIspec}. 
Hence, for 
every 
$\varepsilon\in(0,\min\{\varepsilon_0,1\})$, $P_\varepsilon=(1-\sqrt{\varepsilon},\sqrt{\varepsilon})^T$ is a constant pseudosolution of~\eqref{SIspec} on $[0,\infty)$ with maximum error $\sigma_{P_\varepsilon}=\varepsilon\leq\varepsilon_0$ and such that $P_\varepsilon\in\Gamma_0$. By the definition of the conditional Lipschitz shadowing, this implies that, for every $\varepsilon\in(0,\min\{\varepsilon_0,1\})$, Eq.~\eqref{SIspec} has a solution $(S_\varepsilon(t),I_\varepsilon(t))^T$ on $[0,\infty)$ such that 
\begin{equation}
\label{siepsloc}
(S_{\varepsilon}(t),I_\varepsilon(t))^T\in B_{\kappa\varepsilon}(P_\varepsilon)=B_{\kappa\varepsilon}((1-\sqrt{\varepsilon},\sqrt{\varepsilon})^T)\qquad\text{for all $t\in[0,\infty)$}.
\end{equation}
In particular, we have that
\begin{equation}
\label{limsetincl}
\emptyset\neq\omega(S_\varepsilon,I_\varepsilon)\subset B_{\kappa\varepsilon}(P_{\varepsilon})\qquad\text{whenever $\varepsilon>0$ is sufficiently small},
\end{equation}
where $\omega(S_\varepsilon,I_\varepsilon)$ denotes the omega-limit set of the solution $(S_\varepsilon(t),I_\varepsilon(t))^T$. If $\varepsilon>0$ is sufficiently small, then  $\kappa\varepsilon<\sqrt{\varepsilon}$. Hence,
\begin{equation}
\label{bepsloc}
B_{\kappa\varepsilon}(P_{\varepsilon})\subset
G:=(0,1)\times(0,1)\qquad\text{whenever $\varepsilon>0$ is sufficiently small.}
\end{equation}
Choose $\varepsilon>0$ small enough such that both~\eqref{siepsloc} and~\eqref{bepsloc} are satisfied. Define $V\colon\mathbb R^2\rightarrow\mathbb R$ by
\begin{equation*}
V(S,I)=I\qquad\text{for $(S,I)^T\in\mathbb R^2$}.
\end{equation*}
Then $V'(S,I)=(0,1)$
and for the derivative of~$V$ along system~\eqref{SIspec}, we have
\begin{equation*}
\dot V_{\eqref{SIspec}}(S,I)=V'(S,I)h(S,I)=-I(1-S)\leq0\qquad\text{for $(S,I)^T\in\overline{G}=[0,1]\times[0,1]$}.
\end{equation*}
Thus, $V$ is a Lyapunov function for Eq.~\eqref{SIspec} on $\overline{G}$ 
(see \cite[Chap.~2, Definition~6.1, p.~30]{LaSa}) and
\begin{equation}
\label{esetdef}
E:=\{\,(S,I)^T\in\overline{G}\mid\dot V_{\eqref{SIspec}}(S,I)=0\,\}=
\{\,(S,I)^T\in\overline{G}\mid\text{$I=0$ or $S=1$}\,\}.
\end{equation}
By the application of LaSalle's invariance principle (see, e.g., \cite[Chap.~2, Theorem~6.1, p.~30]{LaSa}), we conclude that 
$\omega(S_\varepsilon,I_\varepsilon)\subset E$. This, combined with~\eqref{limsetincl}, yields
\begin{equation*}
	\emptyset\neq\omega(S_\varepsilon,I_\varepsilon)\subset B_{\kappa\varepsilon}(P_{\varepsilon})\cap E.
\end{equation*}
Thus, $B_{\kappa\varepsilon}(P_{\varepsilon})\cap E\neq\emptyset$. On the other hand,~\eqref{bepsloc} and~\eqref{esetdef} imply that 
$B_{\kappa\varepsilon}(P_{\varepsilon})\cap E=\emptyset$.	This contradiction proves that Eq.~\eqref{SIspec} does not have the conditional shadowing property in~$\Gamma_0$. Note that if we take $H=\Gamma_0$ and $\mu=\mu_\infty$, then~\eqref{mugderineq} with $c=0$ implies that
\begin{equation*}
\sup_{(S,I)^T\in\Gamma_0}\mu_\infty(h'(S,I))=0,
\end{equation*}
which shows the importance and the sharpness of condition~\eqref{musupbound} in Corollary~\ref{modglobcrit}.
\end{example}

\section*{Acknowledgements}
L. Backes was
 partially 
supported by the CNPq-Brazil PQ fellowship under Grant No. 307633/2021-7.
D. Dragi\v cevi\' c was supported in part by Croatian Science Foundation under the Project IP-2019-04-1239 and by the University of Rijeka under the Projects uniri-prirod-18-9 and uniri-prprirod-19-16. 
M. Onitsuka was supported in part by the Japan Society for the Promotion of Science (JSPS) KAKENHI Grant No. JP20K03668.
M.~Pituk was supported by the Hungarian National Research, Development and Innovation Office Grant No.~K139346.

\end{document}